\documentclass[11pt]{amsart}
\usepackage{latexsym}
\usepackage{graphicx}
\usepackage{subfigure}
\usepackage{amsmath}
\usepackage{amssymb}
\usepackage{mathrsfs}
\usepackage{epstopdf}
\DeclareGraphicsRule{.tif}{png}{.png}{\xi`convert #1 `basename #1
.tif`.png}
\usepackage[all]{xypic}
\linespread{1.17}

\newtheorem{theorem}{Theorem}[section]
\newtheorem{lemma}[theorem]{Lemma}
\newtheorem{prop}[theorem]{Proposition}

\newtheorem{definition}[theorem]{Definition}

\newenvironment{remark}{\noindent \textbf{Remark}.}{\hfill $\square$}

\numberwithin{equation}{section}

 \textwidth = 6.5 in \textheight =
9.0 in \oddsidemargin = 0.0 in \evensidemargin = 0.0 in \topmargin =
-0.5 in \headheight = 0.0 in \headsep = 0.5 in
\parskip = 0.1 in

\title{ Deformations of pairs $(X,L)$ when $X$ is singular}
\author{Jie Wang}
\thanks{ Department of Mathematics, Ohio State University, Columbus, OH, 43210 (jwang@math.ohio-state.edu).}
\date{\today}
\begin{document}
\maketitle
\begin{abstract} We give an elementary construction of the tangent-obstruction theory of the deformations of the pair $(X,L)$ with $X$ a reduced local complete intersection scheme and $L$ a line bundle on $X$. This generalizes the classical deformation theory of pairs in case $X$ is smooth. A criteria for sections of $L$ to extend is also given.

\end{abstract}
\vspace{.5cm}
\section{introduction.}

 Throughout this paper, we will work over the complex numbers $\mathbb{C}$. The deformation theory of the pair $(X,L)$ for $X$ a smooth variety and $L$ a line bundle on $X$ was first used to study Petri's conjecture by Arbarello and Cornalba in \cite{AC}. It was proved there that first-order deformations of the pair $(X,L)$ are in natural one to one correspondence with 
$$\xi\in H^1(X,\mathcal{D}_1(L)),$$ 
where $\mathcal{D}_1(L)$ is the sheaf of holomorphic first-order differential operators, and $H^2(X,\mathcal{D}_1(L))$ is an obstruction space. Given a first-order deformation $\phi\in H^1(X,T_X)$ of $X$, there is a first-order deformation of $L$ along $\phi$ if and only if $\phi\cup c(L)=0\in H^2(X,\mathcal{O}_X)$, where $c(L)\in H^1(X,\Omega^1_X)$ is the first Chern class of $L$ in the sense of Atiyah.

Moreover, there is a natural differentiation map
\begin{eqnarray}\label{np}
\xymatrix{H^1(X,\mathcal{D}_1(L))\ar[r]^-{M}& Hom(H^0(X,L),H^1(X,L))}
\end{eqnarray}
such that a section $s\in H^0(X,L)$ extends to first order along $\xi$ if and only if the element
$$M(\xi)(s)\in H^1(X,L)$$
is zero.

The map $M$ together with the tangent obstruction spaces have numerous deformation theoretic applications. For instance, for any first-order deformation of $(X,L)$, at least $h^0(L)-h^1(L)$ linearly independent sections of $L$ extend; $Ker(M)\subset H^1(X,\mathcal{D}_1(L))$ is the space of first-order deformations of $(X,L)$ to which all sections of $L$ extend.
If $X$ is a complete curve, a dual form of (\ref{np}) is the higher $\mu$-map $\mu_1$ in \cite{ACGH1}. In case $L$ gives an embedding of $X$ into some projective space $\mathbb{P}$, $Coker(M)$ is naturally isomorphic to $H^1(X,N_{X|\mathbb{P}})$ (cf. \cite{AC}), and therefore the surjectivity of $M$ implies that $X\subset\mathbb{P}$ is unobstructed.  Another direct consequence  is that the deformations of the pair $(X,L)$ is unobstructed for smooth curve $X$, since $H^2(X,\mathcal{D}_1(L))=0$. 
  If $X$ is a smooth $K3$-surface, the map $\xymatrix{H^1(T_X)\ar[r]^-{\cup c(L)}&H^2(\mathcal{O}_X)\cong\mathbb{C}}$ is surjective for every nontrival line bundle $L$. This means that $L$ deforms along a 19-dimensional subspace of $H^1(T_X)$, because $h^1(X,T_X)=20$. 

In this paper, we give an elementary approach to the deformation theory of the pair $(X,L)$ for $X$ a separated reduced local complete intersection scheme (l.c.i) of finite type over $\mathbb{C}$. We prove that even though $X$ could be singular, the functor of Artin rings
  $$Def_{(X,L)}(A)=\{\text{Flat deformations of } (X,L) \text{ over $A$}\}/\text{isomorphisms}$$
   still behaves well in the sense that there is a tangent-obstruction theory for this deformation functor, with tangent space $\mathrm{Ext}_{\mathcal{O}_X}^1(\mathcal{P}^1_X(L),L)$ and obstruction space $\mathrm{Ext}_{\mathcal{O}_X}^2(\mathcal{P}^1_X(L),L)$, where $\mathcal{P}^1_X(L)$ is the sheaf of one jets or sheaf of principle parts of $L$ on $X$. Moreover, there is a natural map analogous to $M$ characterizing obstructions for sections of $L$ to extend. Therefore, all the nice consequences mentioned above generalize to reduced l.c.i schemes.
If $X$ is smooth, $\mathcal{P}^1_X(L)={\mathcal{D}_1(L)}^{*}\otimes L$, where $\mathcal{D}_1(L)$ is the sheaf of first-order differential operators on $L$, and ${\mathrm{Ext}}_{\mathcal{O}_X}^i(\mathcal{P}^1_X(L),L)=H^i(X,\mathcal{D}_1(L))$. We go back to the classical case. The tangent and obstruction spaces for deformations of $(X,L)$ was known to experts and was stated implicitly in \cite{I}, \cite{I1}. Our approach is more elementary and does not use the more abstract machinery of cotangent complexes. It seems to the author that the criteria for sections of $L$ to extend is new. 

{\bf Acknowledgements.} The author would like to thank his advisor Herb Clemens for valuable suggestions and constant support, E. Sernesi and M. Manetti for pointing out a gap in the previous version of the paper.

\vspace{.5cm}
 \section{the sheaf of one jets}
In this section, we briefly review some basic facts and definitions about the sheaf of one jets.

 Let $g:X\rightarrow Y$ be a morphism between two algebraic schemes (separated schemes of finite type over $\mathbb{C}$), $L$ be a line bundle on $X$, and let $\Delta\subset X\times _Y X$ be the diagonal
defined by ideal sheaf  $\mathcal{I}_\Delta$. Consider the first
order neighborhood $Spec\frac{\mathcal{O}_{X\times_Y
X}}{{\mathcal{I}_\Delta}^2} $ of $\Delta$ with two
projections $\pi_1, \pi_2$ to $X$. The sheaf of one jets $\mathcal{P}^1_{X/Y}(L)$ of $X$ over $Y$
is defined to be $\mathcal{P}^1_{X/Y}(L):=\pi_{1*}\pi_2^*(L)$. $\mathcal{P}^1_{X/Y}(L)$ has a natural left $\mathcal{O}_X$-module structure induced by $\pi_1$ and a right $\mathcal{O}_X$-module structure induced by $\pi_2$ which, in general, is not equivalent to the left one. Throughout this paper, we will only use the left $\mathcal{O}_X$-module structure of $\mathcal{P}^1_{X/Y}(L)$. Consider the
short exact sequence 
$$0\longrightarrow\frac{\mathcal{I}_\Delta}{\mathcal{I}_\Delta^2}\longrightarrow\frac{\mathcal{O}_{X\times_Y X}}
{\mathcal{I}_\Delta^2}\longrightarrow\frac{\mathcal{O}_{X\times_Y
X}}{\mathcal{I}_\Delta}\longrightarrow0$$ 
Tensoring the above sequence with $\pi_2^*L$ then applying the functor
$\pi_{1*}$, we get a short exact sequence of left $\mathcal{O}_X$-modules on $X$

\begin{eqnarray}\label{1jet}\xymatrix{0\ar[r]&\Omega^1_{X/Y}(L)\ar[r]^i& \mathcal{P}^1_{X/Y}(L)\ar[r]&L\ar[r]&0}
\end{eqnarray}
where $\Omega^1_{X/Y}$ is the sheaf of relative K\"{a}hler differentials.
The sequence is exact on the right because there is no higher
derived image for $\pi_{1*}$ ($\pi_{1}$ has relative dimension $0$). When $Y=Spec(\mathbb{C})$, we will write $\mathcal{P}^1_X(L)$ for  $\mathcal{P}^1_{X/Y}(L)$.
The \textquotedblleft fibre\textquotedblright of the sheaf $\mathcal{P}^1_{X/Y}(L)$ at a closed point $x\in X$ is the stalk of $L|_{g^{-1}(g(x))}$  at $x$ mod the maximal ideal squared, i.e.
$$\mathcal{P}^1_{X/Y}(L)_x\otimes_{\mathcal{O}_{X,x}}\frac{\mathcal{O}_{X,x}}{m_x}\cong\frac{L_x}{({m_x}^2+m_{g(x)})L_x}.$$
This is the reason $\mathcal{P}^1_{X/Y}(L)$ is called the sheaf of (relative) one jets. 
There is a $\mathcal{O}_Y$-linear splitting $p_1: L\rightarrow\mathcal{P}^1_{X/Y}(L)$, which sends a section $s$ of $L$ to its one jet $\pi_{1*}\pi_2^*s$. $p_1$ satisfies the property that 
\begin{eqnarray} \label{pmap}
p_1(fs)=i(df\otimes s)+fp_1(s) 
\end{eqnarray}
for any$f\in\mathcal{O}_X(U)$ and $s\in L(U)$ where $U\subset X$ is any open subset.(In fact, $p_1$ is $\mathcal{O}_X$-linear if we use the right $\mathcal{O}_X$-module structure of $\mathcal{P}^1_{X/Y}(L)$). If $X$ is smooth, $Y=Spec(\mathbb{C})$, $\mathcal{P}^1_{X}(L)$ is the vector bundle  $\mathcal{H}om_{\mathcal{O}_X}(\mathcal{D}_1(L),L)$, where $\mathcal{D}_1(L)$ is the sheaf of first-order differential operators on $L$.

 \section{computation of the tangent space}
 In this section, let $X$ be a reduced algebraic scheme. Applying the functor
$\mathcal{H}om_{\mathcal{O}_X}(-,L)$ to (\ref{1jet}), we get a
long exact sequence
$$...\longrightarrow \mathrm{Ext}_{\mathcal{O}_X}^1(L,L)\longrightarrow
\mathrm{Ext}_{\mathcal{O}_X}^1(\mathcal{P}^1_X(L),L)\longrightarrow
\mathrm{Ext}_{\mathcal{O}_X}^1(\Omega^1_X(L),L)\longrightarrow$$ 
$$\ \ \ \ \ \ \ \longrightarrow \mathrm{Ext}_{\mathcal{O}_X}^2(L,L)\longrightarrow
\mathrm{Ext}_{\mathcal{O}_X}^2(\mathcal{P}^1_X(L),L)\longrightarrow
\mathrm{Ext}_{\mathcal{O}_X}^2(\Omega^1_X(L),L)\longrightarrow...$$ 

Notice that
$\mathrm{Ext}_{\mathcal{O}_X}^1(\Omega^1_X(L),L)=\mathrm{Ext}_{\mathcal{O}_X}^1(\Omega^1_X,\mathcal{O}_X)$
is the tangent space of the deformations of $X$, and
$\mathrm{Ext}_{\mathcal{O}_X}^1(L,L)=H^1(\mathcal{O}_X)$ is the tangent space of
deformations of $L$ with the base $X$ fixed. This suggests that
$\mathrm{Ext}_{\mathcal{O}_X}^1(\mathcal{P}^1_X(L),L)$ is the tangent space of
deformations of the pair $(X,L)$ and $\mathrm{Ext}_{\mathcal{O}_X}^2(\mathcal{P}^1_X(L),L)$ is an obstruction space. If $X$ is smooth,
$\mathcal{H}om_{\mathcal{O}_X}(\mathcal{P}^1_X(L),L)$ is the sheaf of first-order
differential operators $\mathcal{D}_1(L)$, and
$\mathrm{Ext}_{\mathcal{O}_X}^1(\mathcal{P}^1_X(L),L)=H^1(X,\mathcal{D}_1(L))$ is the
correct tangent space. In this section and the next, we will prove this is indeed the correct generalization of the tangent-obstruction theory for deformations of the pair $(X,L)$.

Let's first recall that for any reduced algebraic scheme over $\mathbb{C}$, we have an one-to-one correspondence between isomorphism classes of extensions of $X$ by a coherent locally free $\mathcal{O}_X$-module $\mathcal{I}$ and $\mathrm{Ext}^1_{\mathcal{O}_X}(\Omega^1_X,\mathcal{I})$ in the following way:

Given an isomorphism class of extension of $\mathcal{O}_X$ by $\mathcal{I}$,

$$\xymatrix{0\ar[r]&\mathcal{I}\ar[r]&\mathcal{O}_{\mathcal{X}}\ar[r]&\mathcal{O}_X\ar[r]&0},$$

when $X\subset \mathcal{X}$ is a closed immersion defined by ideal sheaf $\mathcal{I}$, and $\mathcal{I}^2=0$ in $\mathcal{O}_{\mathcal{X}}$,

 we associate to it (the isomorphism class of) the conormal sequence
$$\xymatrix{\mathcal{E}:0\ar[r]&\mathcal{I}\ar[r]&\Omega^1_{\mathcal{X}}|_X\ar[r]&\Omega^1_X\ar[r]&0}$$
(which is also exact on the left.)

 This conormal sequence corresponds to an element $c_{\mathcal{E}}$ in $\mathrm{Ext}^1_{\mathcal{O}_X}(\Omega^1_X,\mathcal{I})$.
 
 Conversely, for any $\mathcal{O}_X$-module extension
\begin{eqnarray}\label{ext}
\xymatrix{0\ar[r]&\mathcal{I}\ar[r]&\mathcal{E}\ar[r]^-{h}&\Omega^1_X\ar[r]&0},
\end{eqnarray}

let $d:\mathcal{O}_X\rightarrow\Omega^1_X$ be the canonical derivation. Let $\mathcal{O}=\mathcal{O}_X\times_{\Omega^1_X}{\mathcal{E}}$ be the fibre product sheaf: over an open subset $U\subset X$ we have $\mathcal{O}(U)=\{(f,a):h(a)=df\}$, with ring structure given by
$$(f,a)(f',a')=(ff',fa'+f'a)$$
We get a commutative diagram:

\hskip 2in \xymatrix{0\ar[r]&\mathcal{I}\ar[r]^j\ar@{=}[d]&\mathcal{O}\ar[r]\ar[d]^{d'}&\mathcal{O}_X\ar[r]\ar[d]^d&0\\
0\ar[r]&\mathcal{I}\ar[r]&\mathcal{E}\ar[r]&\Omega^1_X\ar[r]&0}

It is easy to check that $d': \mathcal{O}\rightarrow\mathcal{E}$ is a $\mathbb{C}$-derivation, thus factors through $\Omega^1_{\mathcal{O}}\otimes_{\mathcal{O}}\mathcal{O}_X$. Therefore $\Omega^1_{\mathcal{O}}\otimes_{\mathcal{O}}\mathcal{O}_X\cong\mathcal{E}$ by $5$-lemma and we recover $\mathcal{X}$ from (\ref{ext}).

In case $\mathcal{I}=\mathcal{O}_X$, we can give $\mathcal{O}_{\mathcal{X}}$ a $\mathbb{C}[\epsilon]$-module structure by sending $\epsilon$ to $j(1)\in\mathcal{O}_{\mathcal{X}}$. The fact that $\epsilon\mathcal{O}_{\mathcal{X}}\cong\mathcal{O}_X$ means that $\mathcal{X}$ is flat over $Spec(\mathbb{C}[\epsilon])$. Therefore $\mathcal{X}$ is a first-order infinitesimal deformation of $X$.

For the deformations of the pair $(X,L)$, we have the following result:

\begin{theorem}\label{theorem3.1}
Let $X$ be a reduced scheme of finite
type over $\mathbb{C}$, $L$ be a line bundle on $X$.
\begin{enumerate}
\item[(1)] The tangent space of the functor of Artin rings
$Def_{(X,L)}$ is canonically identified with
$\mathrm{Ext}_{\mathcal{O}_X}^1(\mathcal{P}^1_X(L),L)$.
\vspace{.3cm}
\item[(2)] There exists a natural pairing 
\begin{eqnarray}\label{natural pairing}\xymatrix{\mathrm{Ext}_{\mathcal{O}_X}^1(\mathcal{P}^1_X(L),L)\otimes H^0(X,L)\ar[r]^-{p}& H^1(X,L)}.
\end{eqnarray}
such that for any first-order deformation of the pair $(X,L)$ corresponding to $\xi\in{\mathrm{Ext}}_{\mathcal{O}_X}^1(\mathcal{P}^1_X(L),L)$, a
section $s\in H^0(L)$ extends to first order along $\xi$ if and only
if $\xi$ and $s$ pair to zero under $p$.

\end{enumerate}
\end{theorem}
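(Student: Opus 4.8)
\medskip
\noindent\textit{Proof proposal.}
The plan for (1) is to extend the construction recalled just above for deformations of $X$ alone, using $\mathcal{P}^1_X(L)$ as the replacement for $\Omega^1_X$ and the Leibniz-type identity (\ref{pmap}) for the jet map $p_1$ as the only nontrivial computational input. Concretely, I will produce mutually inverse assignments between isomorphism classes of first-order deformations of the pair $(X,L)$ and classes in $\mathrm{Ext}^1_{\mathcal{O}_X}(\mathcal{P}^1_X(L),L)$, compatible via the sequence (\ref{1jet}) with the recalled bijection between first-order deformations of $X$ and $\mathrm{Ext}^1_{\mathcal{O}_X}(\Omega^1_X,\mathcal{O}_X)$, and with the identification $\mathrm{Ext}^1_{\mathcal{O}_X}(L,L)=H^1(\mathcal{O}_X)$ of deformations of $L$ with $X$ fixed. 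For (2) the pairing will be built from $p_1$ composed with a connecting homomorphism, and the section-extension criterion will drop out of the explicit form of the deformation attached to $\xi$.

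\emph{From a class to a deformation.} Given $\xi$ represented by an extension $0\to L\to\mathcal{E}\xrightarrow{h}\mathcal{P}^1_X(L)\to0$, first pull back along $i\colon\Omega^1_X(L)\hookrightarrow\mathcal{P}^1_X(L)$ and twist by $L^{-1}$ to get an extension $0\to\mathcal{O}_X\to\mathcal{E}'\to\Omega^1_X\to0$, hence by the recalled correspondence a first-order deformation $\mathcal{X}$ of $X$ with $\mathcal{O}_\mathcal{X}=\mathcal{O}_X\times_{\Omega^1_X}\mathcal{E}'$. Then set $\mathcal{L}:=\mathcal{E}\times_{\mathcal{P}^1_X(L)}L$, the fibre product over $h$ and the ($\mathbb{C}$-linear) jet map $p_1\colon L\to\mathcal{P}^1_X(L)$ (a priori only a sheaf of $\mathbb{C}$-vector spaces), and give it the $\mathcal{O}_\mathcal{X}$-action $(f,v)\cdot(\sigma,t)=(f\sigma+v\cdot t,\ ft)$. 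The one point to verify is that the right-hand side again lies in the fibre product, i.e. $h(f\sigma+v\cdot t)=p_1(ft)$: since $h(\sigma)=p_1(t)$ and the $\Omega^1_X(L)$-component of $v\cdot t$ is $df\otimes t$, this is exactly (\ref{pmap}). A short diagram chase then gives $0\to L\xrightarrow{\ \epsilon\ }\mathcal{L}\to L\to0$ with the left-hand copy equal to $\epsilon\mathcal{L}$, so $\mathcal{L}$ is flat over $\mathbb{C}[\epsilon]$; being coherent with $\mathcal{L}|_X\cong L$ locally free, it is a line bundle on $\mathcal{X}$, and $(\mathcal{X},\mathcal{L})$ is a first-order deformation of the pair.

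\emph{From a deformation to a class, and bijectivity.} Given $(\mathcal{X},\mathcal{L})$, restrict the absolute one-jet sequence $0\to\Omega^1_{\mathcal{X}/\mathbb{C}}\otimes\mathcal{L}\to\mathcal{P}^1_{\mathcal{X}/\mathbb{C}}(\mathcal{L})\to\mathcal{L}\to0$ to $X$: flatness of $\mathcal{L}$ makes the restricted sequence short exact, and the natural restriction-of-jets surjection $\mathcal{P}^1_{\mathcal{X}/\mathbb{C}}(\mathcal{L})|_X\twoheadrightarrow\mathcal{P}^1_X(L)$ together with the trivialization $\mathcal{I}/\mathcal{I}^2\cong\mathcal{O}_X\cdot\epsilon$ of the conormal bundle identifies its kernel with $L$ (using $p_1(\epsilon\tilde t)|_X=i(d\epsilon|_X\otimes t)$ from (\ref{pmap})), yielding an extension $0\to L\to\mathcal{P}^1_{\mathcal{X}/\mathbb{C}}(\mathcal{L})|_X\to\mathcal{P}^1_X(L)\to0$ and hence a class in $\mathrm{Ext}^1_{\mathcal{O}_X}(\mathcal{P}^1_X(L),L)$. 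I then check that the two assignments are well defined (independent of the chosen representatives) and mutually inverse; this round-trip verification, and making the jet-restriction map precise, is the one place I expect genuine work, everything else being a routine diagram chase building on the recalled correspondence.

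\emph{The pairing.} Since $p_1$ is a morphism of sheaves of $\mathbb{C}$-vector spaces, it induces $p_1\colon H^0(X,L)\to H^0(X,\mathcal{P}^1_X(L))$, and I define $p(\xi\otimes s):=\delta_\xi(p_1(s))\in H^1(X,L)$, where $\delta_\xi\colon H^0(\mathcal{P}^1_X(L))\to H^1(L)$ is the connecting homomorphism of the extension $\xi$; bilinearity and independence of the chosen representative follow from linearity of $p_1$ and additivity of connecting maps in both the extension class and the section. Finally, in the deformation $(\mathcal{X},\mathcal{L})$ attached to $\xi$ above, a global section of $\mathcal{L}$ on $\mathcal{X}$ restricting to $s$ is precisely a pair $(\sigma,s)$ with $\sigma\in H^0(X,\mathcal{E})$ and $h(\sigma)=p_1(s)$, i.e. a global lift of $p_1(s)$ to $\mathcal{E}$; by the long exact cohomology sequence of $\xi$ such a lift exists if and only if $\delta_\xi(p_1(s))=0$, which is exactly the statement that $s$ extends to first order along $\xi$ if and only if $\xi$ and $s$ pair to zero under $p$.
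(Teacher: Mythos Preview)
Your proposal is correct and follows essentially the same route as the paper: the paper also builds $\mathcal{L}$ as the fibre product $L\times_{\mathcal{P}^1_X(L)}\mathcal{E}$ with the $\mathcal{O}_{\mathcal{X}}$-action verified via the identity (\ref{pmap}), obtains the reverse map by restricting the jet sequence of $\mathcal{X}$ to $X$ (using a Snake Lemma diagram where you sketch a direct kernel computation), and defines the pairing as $\delta_\xi(p_1(s))$ with the same lift-to-$\mathcal{E}$ argument for the extension criterion. The only cosmetic differences are the order of the fibre-product factors and that the paper shows local freeness of $\mathcal{L}$ by reducing to the case $L=\mathcal{O}_X$ rather than your flatness-plus-Nakayama argument.
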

\begin{proof}
\begin{enumerate}
\item[(1)] Given a first-order deformation of the pair $(X,L)$, i.e.
the following fibered diagram with $\mathcal{O}_{\mathcal{X}}$ flat over $Spec(\mathbb{C}[\epsilon])$ and $\mathcal{L}$ line bundle on $\mathcal{X}$:
\vspace{.2in}

\hskip 2in\xymatrix{L\ar@{^{(}->}[r] \ar[d] &\mathcal{L}\ar[d]\\
X\ar@{^{(}->}[r] \ar[d] & \mathcal{X}\ar[d]\\
Spec(\mathbb{C})\ar@{^{(}->}[r]& Spec(\mathbb{C}[\epsilon]) }
\vspace {.2in}

We have a diagram of (left) $\mathcal{O}_X$-modules:

\vspace{.2in}
\hskip 1in\xymatrix{&&0\ar[d]&0\ar[d]&\\
0\ar[r]&L\ar[r]\ar@{=}[d]&\Omega^1_{\mathcal{X}}(\mathcal{L})|_X\ar[r]\ar[d]&\Omega^1_X(L)\ar[r]\ar[d]^i&0\\
0\ar[r]&L\ar[r]&\mathcal{P}^1_{\mathcal{X}}(\mathcal{L})|_X\ar[r]^r \ar[d]&\mathcal{P}^1_X(L)\ar[r]\ar[d]&0\\
&&\mathcal{L}|_X\ar@{=}[r]\ar[d]&L\ar[d]\\
&&0&0&}

\vspace{.2in}
The two right columns are exact by (\ref{1jet}), and the fact that restriction to $X$ is (left) exact since $\mathcal{T}{or}^1_{\mathcal{O}_{\mathcal{X}}}(\mathcal{L},\mathcal{O}_X)=0$. ($\mathcal{L}$ is a locally free $\mathcal{O}_{\mathcal{X}}$-module!) The first row is the conormal sequence of $X\subset\mathcal{X}$ twisted by $L$, which is exact. Thus by Snake Lemma, $ker(r)=L$ and the second row is exact . Therefore, we can associate any first-order deformation of the pair $(X,L)$ the second row exact sequence, which corresponds to an element of $\mathrm{Ext}_{\mathcal{O}_X}^1(\mathcal{P}^1_X(L),L)$.

Now consider the commutative diagram
$$\xymatrix{0\ar[r]&L\ar[r]\ar@{=}[d]&\mathcal{L}\ar[r]\ar[d]^-{p_1'}&L\ar[r]\ar[d]^-{p_1}&0\\
0\ar[r]&L\ar[r]&\mathcal{P}^1_{\mathcal{X}}(\mathcal{L})|_X\ar[r]^-{r}&\mathcal{P}^1_X(L)\ar[r]&0}$$
where $p_1'$ is the composition of $p_1:\mathcal{L}\rightarrow\mathcal{P}^1_{\mathcal{X}}(\mathcal{L})$ and the restriction map to $X$. Thus $p_1'$ factors through $L\times_{\mathcal{P}^1_X(L)}\mathcal{P}^1_{\mathcal{X}}(\mathcal{L})|_X$ and therefore $\mathcal{L}\cong L\times_{\mathcal{P}^1_X(L)}\mathcal{P}^1_{\mathcal{X}}(\mathcal{L})|_X$. This fact suggests that we can recover $\mathcal{L}$ from  $\mathcal{P}^1_{\mathcal{X}}(\mathcal{L})|_X$ and $L$.
\vspace{.2in}

Conversely, for any element
$\xi\in\mathrm{Ext}_{\mathcal{O}_X}^1(\mathcal{P}^1_X(L),L)$ corresponding to an $\mathcal{O}_X$-module extension:
$$\xymatrix{0\ar[r]&L\ar[r]&\mathcal{E}\ar[r]^-{r}&
\mathcal{P}^1_X(L)\ar[r]&0.}$$ 
The pull back extension
$\mathcal{E}'=\mathcal{E}\times_{\mathcal{P}^1_X(L)}\Omega^1_X(L)$ by the natural inclusion
$$i:\Omega^1_X(L)\longrightarrow\mathcal{P}^1_X(L),$$
 sits naturally in the diagram

\hskip1in\xymatrix{0\ar[r]&L\ar[r]\ar@{=}[d]&\mathcal{E}'=\mathcal{E}\times_{\mathcal{P}^1_X(L)}\Omega^1_X(L)\ar[r]\ar[d]^{i'}&\Omega^1_X(L)\ar[r]\ar[d]^i&0\\
0\ar[r]&L\ar[r]&\mathcal{E}\ar[r]^r&\mathcal{P}^1_X(L)\ar[r]&0}

The first row exact sequence corresponds to an element in
$\mathrm{Ext}_{\mathcal{O}_X}^1(\Omega^1_X(L),L)=\mathrm{Ext}_{\mathcal{O}_X}^1(\Omega^1_X,\mathcal{O}_X)$,
which corresponds to a first-order infinitesimal deformation
$\mathcal{X}$ of $X$ as described in the beginning of this section.

To recover the deformation of $L$, let $\mathcal{E}''=\mathcal{E}'\otimes L^{-1}$ and let
$$\mathcal{L}=L\times_{\mathcal{P}^1_X(L)}\mathcal{E}=\{(s,e)\in L\oplus\mathcal{E}|\,  p_1(s)=r(e)\}.$$
$\mathcal{L}$ has natural $\mathcal{O}_X\times_{\Omega^1_X}\mathcal{E}''(=\mathcal{O}_{\mathcal{X}})$-module structure as below
$$(f,a)(s,e)=(fs,fe+i'(a\cdot s))$$
where $(f,a)\in\mathcal{O}_X\times_{\Omega^1_X}\mathcal{E}''=\mathcal{O}_{\mathcal{X}}$,  $(s,e)\in\mathcal{L}$ and $a\cdot s\in\mathcal{E}'$. This is a well defined $\mathcal{O}_{\mathcal{X}}$-module because 
$$p_1(fs)=i(df\otimes s)+fp_1(s)=r(i'(a\cdot s))+fr(e).$$ 

In order to see $\mathcal{L}$ is a locally free $\mathcal{O}_{\mathcal{X}}$-module of rank one, it suffices to prove the case $L$ is the trivial bundle since the question is local. In this case, (\ref{1jet}) splits (as left $\mathcal{O}_X$-module) and $\mathcal{P}^1_X(\mathcal{O}_X)\cong\mathcal{O}_X\oplus\Omega^1_X$. The statement follows immediately from this.
\vspace{.2in}
\item[(2)]
For any $\xi\in\mathrm{Ext}_{\mathcal{O}_X}^1(\mathcal{P}^1_X(L),L)$ corresponding to the extension
$$0\longrightarrow L\longrightarrow\mathcal{P}^1_{\mathcal{X}}(\mathcal{L})|_X\longrightarrow\mathcal{P}^1_X(L)\longrightarrow0.$$
Define the natural pairing $p(\xi\otimes s):=\delta(p_1(s))\in H^1(L)$. Where $\delta: H^0(\mathcal{P}^1_X(L))\rightarrow H^1(L)$ is the connecting homomorphism of the long exact cohomology sequence corresponding to $\xi$:

\hskip 1in\xymatrix{...\ar[r]&H^0(\mathcal{P}^1_{\mathcal{X}}(\mathcal{L})|_X)\ar[r]^-{r}&H^0(\mathcal{P}^1_X(L))\ar[r]^-{\delta}&H^1(L)\ar[r]&...}

$\delta(p_1(s))=0$ means there exists some $e\in H^0(\mathcal{P}^1_{\mathcal{X}}(\mathcal{L})|_X)$ such that $r(e)=p_1(s)$,
thus $(s,e)$ determines a global section of $\mathcal{L}=L\times_{\mathcal{P}^1_X(L)}\mathcal{P}^1_{\mathcal{X}}(\mathcal{L})|_X$.
\end{enumerate}
\end{proof}

\section{obstructions}
In this section, let $X$ be as in section 3 and we assume furthermore that $X$ is a local complete intersection scheme. We will show that $\mathrm{Ext}^2_{\mathcal{O}_X}( \mathcal{P}^1_X(L),L)$ is an obstruction space for deformations of the pair $(X,L)$.

The general idea is to apply Vistoli's construction of obstruction spaces for deformations of l.c.i schemes (cf. sections 3, 4 of \cite{V}) to the total space of ${L}^{\lor}$ and keep track of the bundle structure using a $\mathbb{C}^*$-action.

For any $z\in\mathbb{C}^*$, denote $\phi_z: L^{\lor}\rightarrow L^{\lor}$ be the multiplication map by $z$ in the fiber direction. Define a $\mathbb{C}^*$-action on $\mathcal{O}_{L^{\lor}}$ and  $\Omega_{L^{\lor}}^1$ by 
\begin{eqnarray}
z\cdot f=z^{-1}\phi_z^*f\\
z\cdot\omega=z^{-1}\phi_z^*\omega
\end{eqnarray}
for local sections $f\in\mathcal{O}_{L^{\lor}}$, $\omega\in\Omega^1_{L^{\lor}}$.

Let $\mathcal{O}_{L^{\lor}}^{\mathbb{C}^*}$ and $\Omega_{L^{\lor}}^{\mathbb{C}^*}$ be the sheaf of sections which are invariant under the $\mathbb{C}^*$-action. Both $\mathcal{O}_{L^{\lor}}^{\mathbb{C}^*}$ and $\Omega_{L^{\lor}}^{\mathbb{C}^*}$ have natural $\mathcal{O}_X$-module structures. Under some trivialization of $L^{\lor}$ over $U\subset X$: $L^{\lor}_U\cong U\times \mathbb{A}^1_t$, $\mathcal{O}_{L^{\lor}}^{\mathbb{C}^*}$ consists of functions on $L^{\lor}$ of the form $f(x)t$, and $\Omega_{L^{\lor}}^{\mathbb{C}^*}$ consists of $1$-forms $f(x)d_{L^{\lor}}t+\omega(x)t$ where $f$ is the pull back of a function on $U$ and $\omega\in\Omega^1_U$.

We have natural isomorphisms of $\mathcal{O}_X$-modules $\mathcal{O}_{L^{\lor}}^{\mathbb{C}^*}\cong L$ and $\mathcal{P}^1_X(L)\cong\Omega_{L^{\lor}}^{\mathbb{C}^*}$. The isomorphisms can be described as follows: for any section $s\in L$, we can naturally view it as a function on the total space of $L^{\lor}$ which restricts to a linear function on the fiber. Such functions are invariant under the $\mathbb{C}^*$-action and vice versa. This gives the first isomorphism. The second isomorphism is the natural one which identifies $p_1(s)$ with $d_{L^{\lor}}(f_s)$, where s is any section of $L$, $f_s$ is the function on $L^{\lor}$ corresponding to $s$ and $d_{L^{\lor}}$ is the exterior derivative on $L^{\lor}$. Under some local trivialization of $L^{\lor}$, it sends $(f,\omega)\in\mathcal{P}^1_X(L)$ to $f(x)d_{L^{\lor}}t+\omega(x)t\in\Omega_{L^{\lor}}^{\mathbb{C}^*}$.

Let 
$$e:\xymatrix{0\ar[r]&J\ar[r]&\widetilde{A}\ar[r]&A\ar[r]&0}$$ 
be a small extension of local artinian $\mathbb{C}$-algebras with $m_{\widetilde{A}} \cdot J =0$. Suppose we have a flat deformation $(\mathcal{X},\mathcal{L})$ of the pair $(X,L)$ over $Spec(A)$:

$$\xymatrix{L\ar[r]\ar[d]&\mathcal{L}\ar[d]\\ X\ar[r]\ar[d]&\mathcal{X}\ar[d]^f\\Spec(\mathbb{C})\ar[r]&Spec(A)}$$

Let $(\widetilde{\mathcal{X}}_{\alpha},\widetilde{\mathcal{L}}_{\alpha})$ and $(\widetilde{\mathcal{X}}_{\beta},\widetilde{\mathcal{L}}_{\beta})$ be two liftings of $(\mathcal{X},\mathcal{L})$ to $Spec(\widetilde{A})$. We would like to measure the difference of two such liftings. 

Let's restrict ourselves to the local situation first. Suppose that $\mathcal{X}$ is affine, embedded in $\mathcal{S}=Spec( A[x_1,...,x_n])$ and the total space of $\widetilde{\mathcal{L}}^{\lor}_i$ are both embedded into $Spec(\widetilde{A}[x_1,...,x_n])\times\mathbb{A}^1=\widetilde{\mathcal{S}}\times\mathbb{A}^1$ with image $\widetilde{\mathcal{X}}_i\times\mathbb{A}^1$. 

Let $\mathcal{I}_0$ be the ideal sheaf of $L^{\lor}$ in $S\times\mathbb{A}^1$. The conormal sequence 
$$\xymatrix{0\ar[r]&\frac{\mathcal{I}_0}{\mathcal{I}_0^2}\ar[r]^-{d}&\Omega_{{S}\times\mathbb{A}^1}|_{L^{\lor}}\ar[r]&\Omega_{L^{\lor}}\ar[r]&0}$$
is exact because $L^{\lor}$ is l.c.i.
Taking the invariant part under the $\mathbb{C}^*$-action we get an exact sequence of $\mathcal{O}_{X}$-modules

\begin{eqnarray}\label{invariant}
\xymatrix{0\ar[r]&{(\frac{\mathcal{I}_0}{\mathcal{I}_0^2})}^{\mathbb{C}^*}\ar[r]^-{d'}&\Omega^{\mathbb{C}^*}_{{S}\times\mathbb{A}^1}|_{L^{\lor}}\ar[r]&\Omega^{\mathbb{C}^*}_{L^{\lor}}\ar[r]&0}.
\end{eqnarray}

The difference of $\widetilde{\mathcal{L}}^{\lor}_{\alpha}$ and $\widetilde{\mathcal{L}}^{\lor}_{\beta}$ as embedded deformations corresponds to an $\mathcal{O}_{L^{\lor}}$-module homomorphism $v_{\alpha\beta}: \frac{\mathcal{I}_0}{\mathcal{I}_0^2}\rightarrow J\otimes_{\mathbb{C}}\mathcal{O}_{L^{\lor}}$. The fact that $\widetilde{\mathcal{L}}^{\lor}_i$ is embedded as $\widetilde{\mathcal{X}}_i\times\mathbb{A}^1$ implies that $v_{\alpha\beta}$ sends the invariant part ${(\frac{\mathcal{I}_0}{\mathcal{I}_0^2})}^{\mathbb{C}^*}$ to the invariant part $J\otimes_{\mathbb{C}}\mathcal{O}^{\mathbb{C}^*}_{L^{\lor}}=J\otimes_{\mathbb{C}}L$. Denote the restriction $v_{\alpha\beta}'$. 

Now, take the push-out of (\ref{invariant}) under $v'_{\alpha\beta}$, we obtain an $\mathcal{O}_X$-module extension $\mathcal{E}_{\alpha\beta} $ of $\mathcal{P}^1_{X}(L)$ by $J\otimes_{\mathbb{C}}L$:
\begin{eqnarray}\label{split}
\xymatrix{0\ar[r]&{(\frac{\mathcal{I}_0}{\mathcal{I}_0^2})}^{\mathbb{C}^*}\ar[r]^-{d'}\ar[d]^-{v'_{\alpha\beta}}&\Omega^{\mathbb{C}^*}_{{S}\times\mathbb{A}^1}|_{L^{\lor}}\ar[r]\ar[d]^-{\psi_{\alpha\beta}}&\Omega^{\mathbb{C}^*}_{L^{\lor}}\ar[r]\ar@{=}[d]&0\\
0\ar[r]&J\otimes_{\mathbb{C}}L\ar[r]^{l_{\alpha\beta}}&\mathcal{E}_{\alpha\beta}\ar[r]&\mathcal{P}^1_{X}(L)\ar[r]&0}
\end{eqnarray}

\begin{lemma}\label{cano}
The extension $\mathcal{E}_{\alpha\beta}$ does not depend on the choice of $\widetilde{\mathcal{S}}$.
\end{lemma}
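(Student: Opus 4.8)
The plan is to run the classical ``independence of the ambient space'' argument (as in \cite{V}) while keeping track of the $\mathbb{C}^*$-action. Let $\widetilde{\mathcal{S}}_1=Spec(\widetilde{A}[x_1,\dots,x_n])$ and $\widetilde{\mathcal{S}}_2=Spec(\widetilde{A}[x'_1,\dots,x'_m])$ be two choices, together with the attendant closed embeddings of $\mathcal{X}$ and of the total spaces of the $\widetilde{\mathcal{L}}^{\lor}_i$. Dominating both by the fibre product $\widetilde{\mathcal{S}}_1\times_{Spec\,\widetilde{A}}\widetilde{\mathcal{S}}_2$ (embedding everything via the product of the two embeddings), it is enough to compare $\widetilde{\mathcal{S}}:=\widetilde{\mathcal{S}}_1$ with $\widetilde{\mathcal{S}}':=\widetilde{\mathcal{S}}\times_{Spec\,\widetilde{A}}\mathbb{A}^N_{\widetilde{A}}$. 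Since $\mathcal{O}_{\widetilde{\mathcal{X}}_{\alpha}}\twoheadrightarrow\mathcal{O}_X$ and $\mathcal{O}_{\widetilde{\mathcal{S}}}\twoheadrightarrow\mathcal{O}_{\widetilde{\mathcal{X}}_{\alpha}}$ (and likewise for $\beta$), I would, after an affine change of the new coordinates $y_1,\dots,y_N$ on $\widetilde{\mathcal{S}}'$, arrange that all $y_j$ vanish identically on the total spaces of $\widetilde{\mathcal{L}}^{\lor}_{\alpha}$ and $\widetilde{\mathcal{L}}^{\lor}_{\beta}$ (hence on $L^{\lor}$); this is possible because each $y_j|_{\widetilde{\mathcal{X}}_i}$ may be prescribed freely subject only to reducing mod $J$ to the chosen value. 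The main obstacle I expect is precisely this bookkeeping: making the reduction to ``adding $N$ variables'' rigorous and choosing the two liftings so that they differ only in the old directions.

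Granting this, the conormal sequence for the larger ambient space decomposes. Because $\widetilde{\mathcal{S}}\times\mathbb{A}^1\subset\widetilde{\mathcal{S}}'\times\mathbb{A}^1$ is cut out by the regular sequence $y_1,\dots,y_N$, the ideal sheaf $\mathcal{I}'_0$ of $L^{\lor}$ in $S'\times\mathbb{A}^1$ satisfies $\mathcal{I}'_0/(\mathcal{I}'_0)^2\cong\mathcal{I}_0/\mathcal{I}_0^2\oplus\bigoplus_{j=1}^N\mathcal{O}_{L^{\lor}}\,dy_j$ and $\Omega_{S'\times\mathbb{A}^1}|_{L^{\lor}}\cong\Omega_{S\times\mathbb{A}^1}|_{L^{\lor}}\oplus\bigoplus_{j=1}^N\mathcal{O}_{L^{\lor}}\,dy_j$, compatibly; so the conormal sequence for $S'\times\mathbb{A}^1$ is the direct sum of the one for $S\times\mathbb{A}^1$ with $N$ copies of the split sequence $0\to\mathcal{O}_{L^{\lor}}\xrightarrow{\mathrm{id}}\mathcal{O}_{L^{\lor}}\to0\to0$. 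Each $y_j$ is pulled back from $S'$ and hence independent of the fibre coordinate $t$, so $\phi_z^*(g\,dy_j)=(\phi_z^*g)\,dy_j$; therefore the $\mathbb{C}^*$-action on $\mathcal{O}_{L^{\lor}}\,dy_j$ is identified with the one on $\mathcal{O}_{L^{\lor}}$, giving $(\mathcal{O}_{L^{\lor}}\,dy_j)^{\mathbb{C}^*}\cong\mathcal{O}_{L^{\lor}}^{\mathbb{C}^*}\cong L$. Taking invariants, the sequence (\ref{invariant}) for $\widetilde{\mathcal{S}}'$ is the direct sum of (\ref{invariant}) for $\widetilde{\mathcal{S}}$ with $N$ copies of $0\to L\xrightarrow{\mathrm{id}}L\to0\to0$, while $\Omega^{\mathbb{C}^*}_{L^{\lor}}\cong\mathcal{P}^1_X(L)$ is unchanged.

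Finally I would compare the push-outs. By the normalization in the first step each $y_j$ lies in the ideal of both $\widetilde{\mathcal{L}}^{\lor}_{\alpha}$ and $\widetilde{\mathcal{L}}^{\lor}_{\beta}$, so these embedded deformations agree in the $dy_j$-directions; consequently the difference homomorphism $v_{\alpha\beta}$ for $\widetilde{\mathcal{S}}'$ equals the one for $\widetilde{\mathcal{S}}$ on the summand $\mathcal{I}_0/\mathcal{I}_0^2$ (using the same ideal lifts) and is zero on $\bigoplus\mathcal{O}_{L^{\lor}}\,dy_j$; the same holds for $v'_{\alpha\beta}$, which vanishes on the extra $L^{\oplus N}$. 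Since forming the push-out in (\ref{split}) commutes with direct sums, and since the push-out of the split sequence $0\to L\xrightarrow{\mathrm{id}}L\to0\to0$ along the zero map $L\to J\otimes_{\mathbb{C}}L$ is absorbed into the complementary summand — concretely, the extra relations $(0,-a)$ with $a\in L^{\oplus N}$ kill the new $L^{\oplus N}$ sitting inside $\mathcal{E}_{\alpha\beta}$ — the extension $\mathcal{E}_{\alpha\beta}$ built from $\widetilde{\mathcal{S}}'$ is canonically isomorphic, as an extension of $\mathcal{P}^1_X(L)$ by $J\otimes_{\mathbb{C}}L$, to the one built from $\widetilde{\mathcal{S}}$. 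Combined with the reduction of the first step this yields the claim; the same argument applied to a change of local trivialization of $L^{\lor}$ shows that choice is immaterial as well.
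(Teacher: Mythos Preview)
Your overall strategy --- dominate two ambient spaces by their fibre product and compare each factor to the product --- is the same as the paper's. The gap is in your normalization step. You claim that after an affine change of the new coordinates $y_1,\dots,y_N$ on $\widetilde{\mathcal{S}}'=\widetilde{\mathcal{S}}\times\mathbb{A}^N$ you can arrange that every $y_j$ vanishes on \emph{both} $\widetilde{\mathcal{L}}^{\lor}_{\alpha}$ and $\widetilde{\mathcal{L}}^{\lor}_{\beta}$. A substitution $y_j\mapsto y_j-f_j$ with $f_j\in\mathcal{O}_{\widetilde{\mathcal{S}}}$ forces $f_j|_{\widetilde{\mathcal{X}}_\alpha}=y_j|_{\widetilde{\mathcal{X}}_\alpha}$ and $f_j|_{\widetilde{\mathcal{X}}_\beta}=y_j|_{\widetilde{\mathcal{X}}_\beta}$ simultaneously; since $\widetilde{\mathcal{X}}_\alpha$ and $\widetilde{\mathcal{X}}_\beta$ are in general distinct closed subschemes of $\widetilde{\mathcal{S}}$, no single $f_j$ need do both (and indeed the discrepancy in the $y$-direction is exactly the $\widetilde{\mathcal{S}}_2$-component of $v_{\alpha\beta}$). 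Your alternative justification, ``each $y_j|_{\widetilde{\mathcal{X}}_i}$ may be prescribed freely,'' is not a coordinate change at all: it replaces the diagonal embeddings into the fibre product by the embeddings $\widetilde{\mathcal{X}}_i\hookrightarrow\widetilde{\mathcal{S}}\times\{0\}$. With those embeddings the comparison with $\widetilde{\mathcal{S}}$ is trivially true, but you have lost the link to $\widetilde{\mathcal{S}}_2$, so the reduction collapses unless you \emph{first} prove independence from the choice of embeddings into a fixed ambient space --- which is precisely the content of the lemma.

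The paper avoids this by never normalizing. It keeps the diagonal embeddings, records the tautological compatibility $v'_{12}\circ p_j=v'_j$ between the difference maps on $C_j^{\mathbb{C}^*}$ and $C_{12}^{\mathbb{C}^*}$, and then invokes the universal property of the push-out: the two rows of the invariant conormal diagrams for $S_j$ and $S_1\times S_2$ are linked by a map of short exact sequences, so the induced map $\mathcal{E}_j\to\mathcal{E}_{12}$ is automatically an isomorphism of extensions. If you prefer your explicit route, it can be repaired without the normalization: in your direct-sum description the extra summand $L^{\oplus N}$ maps \emph{isomorphically} under $d'$ into the middle term, so in the push-out the relations $(v'_{\alpha\beta}(0,b),0,-b)$ kill the new $L^{\oplus N}$ \emph{regardless} of the value of $v'_{\alpha\beta}$ on it, and the quotient is canonically $\mathcal{E}_{\alpha\beta}$ computed from $\widetilde{\mathcal{S}}$. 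Either way, the step ``$v'_{\alpha\beta}$ vanishes on the new directions'' is neither true in general nor needed.
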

\begin{proof}
Suppose there are two embeddings $\widetilde{\mathcal{L}}^{\lor}_i\rightarrow\widetilde{\mathcal{S}}_{j}\times\mathbb{A}^1$, where $i=\alpha,\beta$, $j=1,2$; reducing to embeddings 
$\mathcal{L}^{\lor}\rightarrow\mathcal{S}_{1}\times\mathbb{A}^1$ and $\mathcal{L}^{\lor}\rightarrow\mathcal{S}_{2}\times\mathbb{A}^1$. These induce embeddings 
$$\widetilde{\mathcal{L}}^{\lor}_i\rightarrow\widetilde{\mathcal{S}}_1\times_{Spec(\widetilde{A})}\widetilde{\mathcal{S}}_2\times\mathbb{A}^1$$
 reducing to $$\mathcal{L}^{\lor}\rightarrow\mathcal{S}_1\times_{Spec(A)}\mathcal{S}_2\times\mathbb{A}^1.$$
  Let $C_1$, $C_2$, $C_{12}$ be the conormal bundles of $L^{\lor}$ in $S_{1}\times\mathbb{A}^1$, $S_{2}\times\mathbb{A}^1$, $S_{1}\times S_{2}\times\mathbb{A}^1$ respectively. Denote by $v'_{j}: C_{j}^{\mathbb{C}^*}\rightarrow J\otimes_{\mathbb{C}}L$ the invariant part of the corresponding sections of the normal bundles, $\mathcal{E}_j=v_{j*}'\Omega^{\mathbb{C}^*}_{{S_j}\times\mathbb{A}^1}|_{L^{\lor}}$,  $\mathcal{E}_{12}=v_{12*}'\Omega^{\mathbb{C}^*}_{{S_1}\times S_2\times\mathbb{A}^1}|_{L^{\lor}}$ and $p_{j}:C_{j}^{\mathbb{C}^*}\rightarrow C^{\mathbb{C}^*}_{12}$ be the natural map between conormal bundles. Then 
  $$v'_{12}\circ p_j=v'_j: C^{\mathbb{C}^*}_j\rightarrow J\otimes_{\mathbb{C}}L.$$
   We have the following diagram
$$\xymatrix{0\ar[r]&C_j^{\mathbb{C}^*}\ar[r]\ar[d]^-{p_j}&\Omega^{\mathbb{C}^*}_{{S_j}\times\mathbb{A}^1}|_{L^{\lor}}\ar[r]\ar[d]&\Omega^{\mathbb{C}^*}_{L^{\lor}}\ar[r]\ar@{=}[d]&0\\
0\ar[r]&C^{\mathbb{C}^*}_{12}\ar[r]\ar[d]^-{v'_{12}}&\Omega^{\mathbb{C}^*}_{S_1\times S_2\times\mathbb{A}^1}|_{L^{\lor}}\ar[r]\ar[d]&\Omega^{\mathbb{C}^*}_{L^{\lor}}\ar[r]\ar@{=}[d]&0\\
0\ar[r]&J\otimes_{\mathbb{C}}L\ar[r]&\mathcal{E}_{12}\ar[r]&\Omega^{\mathbb{C}^*}_{L^{\lor}}\ar[r]&0}$$
By the universal property of push out, this diagram induces isomorphism of extensions $\psi_j:\mathcal{E}_j\cong\mathcal{E}_{12}$. We define the canonical isomorphism between $\mathcal{E}_2$ and $\mathcal{E}_1$ to be $\psi_1\circ\psi_2^{-1}$.
\end{proof}

\begin{prop}\label{prop4.2}
For any two liftings of line bundles $\widetilde{\mathcal{L}}^{\lor}_{\alpha}$, $\widetilde{\mathcal{L}}^{\lor}_{\beta}$ inside $\widetilde{\mathcal{S}}\times\mathbb{A}^1$ as above, there is an $\mathcal{O}_X$-module extension $\mathcal{E}_{\alpha\beta}$ of $\mathcal{P}^1_X(L)$ by $J\otimes_{\mathbb{C}}L$, well defined up to canonical isomorphism, with the following properties.
\begin{enumerate}
\item For any three liftings $\widetilde{\mathcal{L}}^{\lor}_{\alpha}$, $\widetilde{\mathcal{L}}^{\lor}_{\beta}$, and $\widetilde{\mathcal{L}}^{\lor}_{\gamma}$, there is a canonical isomorphism of extensions
$$F_{\alpha\beta\gamma}: \mathcal{E}_{\alpha\beta}+\mathcal{E}_{\beta\gamma}\cong\mathcal{E}_{\alpha\gamma}\footnote{The sum of two extensions of $\mathcal{O}_X$-module $\xymatrix{0\ar[r]&\mathcal{G}\ar[r]^-{l_i}&\mathcal{E}_i\ar[r]^-{k_i}\ar[r]&\mathcal{F}\ar[r]&0}$ is defined to be the quotient of the submodule $\mathcal{B}=\{(e_1,e_2)\in\mathcal{E}_1\oplus\mathcal{E}_2:\ k_1(e_1)=k_2(e_2)\}$ by sections of the form $(l_1(y),-l_2(y))$, $y\in\mathcal{G}$.

The oposite extension $-\mathcal{E}$ is defined to be $\xymatrix{0\ar[r]&\mathcal{G}\ar[r]^-{-l}&\mathcal{E}\ar[r]^-{k}\ar[r]&\mathcal{F}\ar[r]&0}$.}$$ 
such that for any four liftings,
\begin{eqnarray}\label{compatible}
F_{\alpha\gamma\delta}\circ(F_{\alpha\beta\gamma}+id_{\mathcal{E}_{\beta\gamma}})=F_{\alpha\beta\delta}\circ(id_{\mathcal{E}_{\alpha\beta}}+F_{\beta\gamma\delta})
\end{eqnarray}
as homomorphism of extensions  from $\mathcal{E}_{\alpha\beta}+\mathcal{E}_{\beta\gamma}+\mathcal{E}_{\gamma\delta}$ to $\mathcal{E}_{\alpha\delta}$.

\item Given an $\mathcal{O}_X$-module extension $\mathcal{E}$ of $\mathcal{P}^1_X(L)$ by $J\otimes_{\mathbb{C}}L$, and an lifting $\widetilde{\mathcal{L}}^{\lor}_{\alpha}$ of $\mathcal{L}^{\lor}$, there is an abstract lifting $\widetilde{\mathcal{L}}^{\lor}_{\beta}$ such that $\mathcal{E}_{\alpha\beta}$ is isomorphic to $\mathcal{E}$.

\item There is a natural bijection between bundle isomorphisms $\Phi:\widetilde{\mathcal{L}}^{\lor}_{\alpha}\cong\widetilde{\mathcal{L}}^{\lor}_{\beta}$ with splittings of $\mathcal{E}_{\alpha\beta}$.

\end{enumerate}

\end{prop}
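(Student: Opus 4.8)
The plan is to reduce the three numbered assertions to one piece of elementary homological algebra — the connecting homomorphism of the sequence~(\ref{invariant}) — together with the fact that embedded liftings inside a fixed ambient scheme form a torsor; existence of $\mathcal{E}_{\alpha\beta}$ up to canonical isomorphism is the content of the construction preceding~(\ref{split}) and of Lemma~\ref{cano}, so we may fix the ambient $\widetilde{\mathcal{S}}$ throughout. Set $C^{\mathbb{C}^*}:=(\mathcal{I}_0/\mathcal{I}_0^2)^{\mathbb{C}^*}$ and let
\[
\delta\colon \mathrm{Hom}_{\mathcal{O}_X}\!\bigl(C^{\mathbb{C}^*},\,J\otimes_{\mathbb{C}}L\bigr)\longrightarrow \mathrm{Ext}_{\mathcal{O}_X}^1\!\bigl(\mathcal{P}^1_X(L),\,J\otimes_{\mathbb{C}}L\bigr)
\]
be the connecting map in the long exact sequence obtained from~(\ref{invariant}) by applying $\mathrm{Hom}_{\mathcal{O}_X}(-,J\otimes_{\mathbb{C}}L)$. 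By the very definition of $\mathcal{E}_{\alpha\beta}$ (diagram~(\ref{split})) its class is $\delta(v'_{\alpha\beta})$, with the displayed push-out a functorial representative. Two facts will be used repeatedly: first, $\delta$ is additive, and this additivity is witnessed — for any fixed extension $\mathcal{F}$ and any homomorphisms $f,g$ out of $C^{\mathbb{C}^*}$ — by the natural isomorphism between the sum (as defined in the footnote to the Proposition) of the push-outs $f_*\mathcal{F}$ and $g_*\mathcal{F}$ and the single push-out $(f+g)_*\mathcal{F}$; second, $v'_{\alpha\alpha}=0$ and $v'_{\alpha\beta}+v'_{\beta\gamma}=v'_{\alpha\gamma}$, because (using $m_{\widetilde{A}}J=0$) the embedded liftings of $\mathcal{L}^\lor$ inside $\widetilde{\mathcal{S}}\times\mathbb{A}^1$ respecting the bundle structure form a torsor under $\mathrm{Hom}_{\mathcal{O}_X}(C^{\mathbb{C}^*},J\otimes_{\mathbb{C}}L)$ — the $\mathbb{C}^*$-invariant part of the usual torsor of embedded deformations — with $v'_{\alpha\beta}$ the difference element.

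For~(1) I would take $F_{\alpha\beta\gamma}$ to be the additivity isomorphism above applied to $f=v'_{\alpha\beta}$, $g=v'_{\beta\gamma}$, using $v'_{\alpha\beta}+v'_{\beta\gamma}=v'_{\alpha\gamma}$ to identify $(f+g)_*\mathcal{F}$ with $\mathcal{E}_{\alpha\gamma}$. The compatibility~(\ref{compatible}) then follows from the associativity of the sum of extensions together with the associativity of addition of homomorphisms: via naturality of the additivity isomorphism, both composites in~(\ref{compatible}) unwind to the canonical identification of the threefold sum $\mathcal{E}_{\alpha\beta}+\mathcal{E}_{\beta\gamma}+\mathcal{E}_{\gamma\delta}$ with the push-out along $v'_{\alpha\beta}+v'_{\beta\gamma}+v'_{\gamma\delta}=v'_{\alpha\delta}$. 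I expect this to be the main obstacle of the proof: it is conceptually routine, but a careful diagram chase is needed to verify that the two composite isomorphisms are literally equal, not merely equal as elements of $\mathrm{Ext}_{\mathcal{O}_X}^1$.

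For~(2) two things are needed. First, $\delta$ is surjective: from the long exact sequence of~(\ref{invariant}) this follows once $\mathrm{Ext}_{\mathcal{O}_X}^1(\Omega^{\mathbb{C}^*}_{S\times\mathbb{A}^1}|_{L^\lor},J\otimes_{\mathbb{C}}L)=0$, which holds because $\Omega^{\mathbb{C}^*}_{S\times\mathbb{A}^1}|_{L^\lor}$ is a locally free $\mathcal{O}_X$-module — in a local trivialization of $L^\lor$ it is $\mathcal{O}_X\,d_{L^\lor}t\oplus(\Omega^1_S|_X\otimes L)$ with $\Omega^1_S|_X$ free — and $X$ is affine. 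Second, every $w\in\mathrm{Hom}_{\mathcal{O}_X}(C^{\mathbb{C}^*},J\otimes_{\mathbb{C}}L)$ equals $v'_{\alpha\beta}$ for some $\beta$: regard $w$ as the invariant part of a homomorphism $\mathcal{I}_0/\mathcal{I}_0^2\to J\otimes_{\mathbb{C}}\mathcal{O}_{L^\lor}$ and perturb, by $w$, the equations cutting out $\widetilde{\mathcal{L}}^\lor_\alpha$ in $\widetilde{\mathcal{S}}\times\mathbb{A}^1$; since $w$ is invariant the perturbed equations still do not involve the fibre coordinate, so they define a subscheme of the form $\widetilde{\mathcal{X}}_\beta\times\mathbb{A}^1$, which is $\mathbb{C}^*$-invariant and flat over $\widetilde{A}$ because $L^\lor$ is a local complete intersection (Vistoli's lifting of equations, see \cite{V}). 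Then $v'_{\alpha\beta}=w$ by inspection, so $\{\delta(v'_{\alpha\beta})\}_\beta$ exhausts $\mathrm{Ext}_{\mathcal{O}_X}^1(\mathcal{P}^1_X(L),J\otimes_{\mathbb{C}}L)$, and any prescribed $\mathcal{E}$ is realized.

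For~(3) the same long exact sequence shows that $\mathcal{E}_{\alpha\beta}$ splits, i.e.\ $\delta(v'_{\alpha\beta})=0$, exactly when $v'_{\alpha\beta}$ lifts to some $\widetilde{w}\in\mathrm{Hom}_{\mathcal{O}_X}(\Omega^{\mathbb{C}^*}_{S\times\mathbb{A}^1}|_{L^\lor},J\otimes_{\mathbb{C}}L)$; fed into the push-out~(\ref{split}), such a $\widetilde{w}$ gives, by the universal property, a retraction of $\mathcal{E}_{\alpha\beta}$, hence a splitting. Geometrically, composing $\widetilde{w}$ with $d_{L^\lor}$ produces a $\mathbb{C}^*$-equivariant $\widetilde{A}$-derivation $\partial$ of $\mathcal{O}_{\widetilde{\mathcal{S}}\times\mathbb{A}^1}$ valued in $J\otimes_{\mathbb{C}}\mathcal{O}_{L^\lor}$ and restricting to $v'_{\alpha\beta}$ on $C^{\mathbb{C}^*}$; since $J^2=0$, $\exp(\partial)=\mathrm{id}+\partial$ is a $\mathbb{C}^*$-equivariant $\widetilde{A}$-algebra automorphism of $\mathcal{O}_{\widetilde{\mathcal{S}}\times\mathbb{A}^1}$ congruent to the identity modulo $J$, and ``$\widetilde{w}$ restricts to $v'_{\alpha\beta}$'' says exactly that it carries the ideal of $\widetilde{\mathcal{L}}^\lor_\beta$ to that of $\widetilde{\mathcal{L}}^\lor_\alpha$, so its restriction is a bundle isomorphism $\Phi\colon\widetilde{\mathcal{L}}^\lor_\alpha\cong\widetilde{\mathcal{L}}^\lor_\beta$; conversely any such $\Phi$ arises this way, by lifting a system of coordinate functions realizing $\Phi$ from $\widetilde{\mathcal{L}}^\lor_\beta$ to $\widetilde{\mathcal{S}}\times\mathbb{A}^1$ and subtracting the identity. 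To promote the equivalence ``$\mathcal{E}_{\alpha\beta}$ splits $\Leftrightarrow$ $\widetilde{\mathcal{L}}^\lor_\alpha\cong\widetilde{\mathcal{L}}^\lor_\beta$'' to the asserted bijection, I would observe that the set of liftings $\widetilde{w}$ of $v'_{\alpha\beta}$, the set of bundle isomorphisms $\widetilde{\mathcal{L}}^\lor_\alpha\cong\widetilde{\mathcal{L}}^\lor_\beta$, and the set of splittings of $\mathcal{E}_{\alpha\beta}$ are each, when nonempty, a torsor under $\mathrm{Hom}_{\mathcal{O}_X}(\mathcal{P}^1_X(L),J\otimes_{\mathbb{C}}L)$ (using $\mathcal{P}^1_X(L)\cong\Omega^{\mathbb{C}^*}_{L^\lor}$ and again $\exp(\partial)=\mathrm{id}+\partial$), and that the maps just described are equivariant, hence bijective; the point requiring some care is precisely that these three torsor structures are matched up compatibly.
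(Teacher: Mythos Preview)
Your proposal is correct and matches the paper's proof in substance: parts~(1) and~(2) are argued exactly as the paper does (additivity of $v'$ together with the universal property of push-out for~(1), the long exact sequence of~(\ref{invariant}) and vanishing of $\mathrm{Ext}^1_{\mathcal{O}_X}(\Omega^{\mathbb{C}^*}_{S\times\mathbb{A}^1}|_{L^\lor},J\otimes_{\mathbb{C}}L)$ for~(2)). For~(3) the paper takes a marginally more direct path---setting $D=\pi_\beta-\phi\circ\pi_\alpha$ as a derivation from $\mathcal{O}_{\widetilde{\mathcal{S}}\times\mathbb{A}^1}$ into $J\otimes_{\mathbb{C}}\mathcal{O}_{L^\lor}$ and verifying that $\phi\mapsto D$ and $D\mapsto\phi$ (via $\pi_\beta-D=\phi\circ\pi_\alpha$) are mutually inverse---which avoids the non-canonical lift of $\partial$ to a derivation of the ambient that your $\exp(\partial)$ formulation implicitly requires, and delivers the bijection without the separate torsor comparison.
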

\begin{proof}
\begin{enumerate}
\item As embedded deformations we certainly have $v_{\alpha\beta}'+v_{\beta\gamma}'=v_{\alpha\gamma}'$ as homomorphisms from $({\frac{\mathcal{I}_0}{\mathcal{I}_0^2})}^{\mathbb{C}^*}$ to $J\otimes_{\mathbb{C}}L$ Then $\mathcal{E}_{\alpha\beta}+\mathcal{E}_{\beta\gamma}$ fits into the diagram
$$\xymatrix{0\ar[r]&{(\frac{\mathcal{I}_0}{\mathcal{I}_0^2})}^{\mathbb{C}^*}\ar[r]^-{d'}\ar[d]^-{v'_{\alpha\beta}+v_{\beta\gamma}'}&\Omega^{\mathbb{C}^*}_{{S}\times\mathbb{A}^1}|_{L^{\lor}}\ar[r]\ar[d]^-{(\psi_{\alpha\beta},\psi_{\beta\gamma})}&\Omega^{\mathbb{C}^*}_{L^{\lor}}\ar[r]\ar@{=}[d]&0\\
0\ar[r]&J\otimes_{\mathbb{C}}L\ar[r]&\mathcal{E}_{\alpha\beta}+\mathcal{E}_{\beta\gamma}\ar[r]&\mathcal{P}^1_{X}(L)\ar[r]&0}$$
By the universal property of push-out, there is a unique isomorphism $F_{\alpha\beta\gamma}^{-1}:\mathcal{E}_{\alpha\gamma}\rightarrow\mathcal{E}_{\alpha\beta}+\mathcal{E}_{\beta\gamma}$ such that $(\psi_{\alpha\beta},\psi_{\beta\gamma})$ factors through $F^{-1}_{\alpha\beta\gamma}$. The compatibility condition (\ref{compatible}) follows from the universal property of push-out as well.
 \vspace{.2cm}
 
\item Applying the derived functor $\mathrm{Hom}_{\mathcal{O}_X}(-,J\otimes_{\mathbb{C}}L)$ to (\ref{invariant}), we obtain exact sequence
$$\xymatrix{\mathrm{Hom}_{\mathcal{O}_X}((\frac{\mathcal{I}_0}{\mathcal{I}_0^2})^{\mathbb{C}^*},J\otimes_{\mathbb{C}}L)\ar[r]& \mathrm{Ext}^1_{\mathcal{O}_X}(\mathcal{P}^1_X(L),J\otimes_{\mathbb{C}}L)\ar[r]&\mathrm{Ext}^1_{\mathcal{O}_X}(\Omega^{\mathbb{C}^*}_{{S}\times\mathbb{A}^1}|_{L^{\lor}},J\otimes_{\mathbb{C}}L)}$$
where the last term is zero because $X$ is affine and $\Omega^{\mathbb{C}^*}_{{S}\times\mathbb{A}^1}|_{L^{\lor}}$ is locally free. Thus for any $$\mathcal{E}\in\mathrm{Ext}^1_{\mathcal{O}_X}(\mathcal{P}^1_X(L),J\otimes_{\mathbb{C}}L),$$ 
there is $$v'\in\mathrm{Hom}_{\mathcal{O}_X}((\frac{\mathcal{I}_0}{\mathcal{I}_0^2})^{\mathbb{C}^*},J\otimes_{\mathbb{C}}L)$$
 such that $v'_*\Omega^{\mathbb{C}^*}_{{S}\times\mathbb{A}^1}|_{L^{\lor}}\cong\mathcal{E}$. $v'$ can be uniquely extended to a $\mathcal{O}_{L^{\lor}}$-module homomorphism $v: \frac{\mathcal{I}_0}{\mathcal{I}_0^2}\rightarrow J\otimes_{\mathbb{C}}\mathcal{O}_{L^{\lor}}$. Now choose $\widetilde{\mathcal{L}}^{\lor}_{\beta}\subset\widetilde{\mathcal{S}}\times\mathbb{A}^1$ such that the difference of $\widetilde{\mathcal{L}}_{\beta}^{\lor}$ and $\widetilde{\mathcal{L}}_{\alpha}^{\lor}$ as embedded deformations corresponds to $v$, then by construction $\mathcal{E}_{\alpha\beta}\cong\mathcal{E}$. 
\vspace{.1cm}

\item
First notice that by the construction of push-out, to give a splitting $s:\mathcal{E}_{\alpha\beta}\rightarrow J\otimes_{\mathbb{C}}L$ is equivalent to give a $\mathcal{O}_{X}$-module homomorphism $D: \Omega^{\mathbb{C}^*}_{{S}\times\mathbb{A}^1}|_{L^{\lor}}\rightarrow J\otimes_{\mathbb{C}}L$ such that $D\circ d'=v'_{\alpha\beta}$. 

Now let $\phi: \mathcal{O}_{\widetilde{\mathcal{L}}^{\lor}_{\alpha}}\cong\mathcal{O}_{\widetilde{\mathcal{L}}^{\lor}_{\beta}}$ be a bundle isomorphism inducing identity on $\mathcal{O}_{\mathcal{L}^{\lor}}$. Consider the two projections $\pi_i: \mathcal{O}_{\widetilde{\mathcal{S}}\times\mathbb{A}^1}\rightarrow\mathcal{O}_{\widetilde{\mathcal{L}}^{\lor}_i}$. The difference 
$$D=\pi_{\beta}-\phi\circ\pi_{\alpha}:\mathcal{O}_{\widetilde{\mathcal{S}}\times\mathbb{A}^1}\rightarrow\mathcal{O}_{\widetilde{\mathcal{L}}^{\lor}_{\beta}}$$
will have its image inside $J\mathcal{O}_{\widetilde{\mathcal{L}}^{\lor}_{\beta}}=J\otimes_{\mathbb{C}}\mathcal{O}_{L^{\lor}}$. It is easy to check that 
$$D\in Der_{\widetilde{A}}(\mathcal{O}_{\widetilde{\mathcal{S}}\times\mathbb{A}^1},J\otimes_{\mathbb{C}}\mathcal{O}_{L^{\lor}})=Der_{\mathbb{C}}(\mathcal{O}_{S\times\mathbb{A}^1}, J\otimes_{\mathbb{C}}\mathcal{O}_{L^{\lor}})\cong Hom_{\mathcal{O}_{L^{\lor}}}(\Omega_{S\times\mathbb{A}^1}|_{L^{\lor}}, J\otimes_{\mathbb{C}}\mathcal{O}_{L^{\lor}})$$ 

and $D\circ d=v_{\alpha\beta}$. The fact that $\phi$ is a bundle isomorphism implies that $D$ sends $\Omega^{\mathbb{C}^*}_{{S}\times\mathbb{A}^1}|_{L^{\lor}}$ to $J\otimes_{\mathbb{C}}\mathcal{O}^{\mathbb{C}^*}_{L^{\lor}}=J\otimes_{\mathbb{C}}L$. This gives a spliting of $\mathcal{E}_{\alpha\beta}$.

Conversely, any $\mathcal{O}_{X}$-module homomorphism 
$$D: \Omega^{\mathbb{C}^*}_{{S}\times\mathbb{A}^1}|_{L^{\lor}}\rightarrow J\otimes_{\mathbb{C}}\mathcal{O}^{\mathbb{C}^*}_{L^{\lor}}$$
 with $D\circ d'=v'_{\alpha\beta}$ can be extended uniquely to a $\mathcal{O}_{L^{\lor}}$-module homomorphism $D: \Omega_{S\times\mathbb{A}^1}|_{L^{\lor}}\rightarrow J\otimes_{\mathbb{C}}\mathcal{O}_{L^{\lor}}$ with $D\circ d=v_{\alpha\beta}$. One checks easily that 
 $$\pi_{\beta}-D$$ vanishes on the ideal sheaf of $\widetilde{\mathcal{L}}_{\alpha}^{\lor}$ thus factors through $\pi_{\alpha}$, and therefore we recover the bundle isomorphism $\phi$ from such $D$.
\end{enumerate}
\end{proof}

\begin{remark} Proposition \ref{prop4.2} still holds in the global case. Since the local extension does not depending on the choice of embeddings, one can construct a global extension for any two abstract liftings $\widetilde{\mathcal{L}}_2^{\lor}$ and $\widetilde{\mathcal{L}}_1^{\lor}$ by glueing together the local extensions using the canonical isomorphisms in lemma \ref{cano} on the overlap of two open affine subsets. One checks easily that the glued extension satisfies the properties in the proposition. We will not need the global case in the construction of the obstruction space.

\end{remark}

The rest of the proof is entirely based on the construction in \cite{V}. The idea is to use extension cocycles to measure the obstructions to patching together local liftings (which always exist since $X$ is l.c.i) coherently. 

Here we collect some useful results about extension cocycles and refer to \cite{V} for details.
\begin{definition} Let $\mathcal{F}$, $\mathcal{G}$ be sheaves of $\mathcal{O}_X$-modules, $\{U_{\alpha}\}$ be an open covering of $X$. An extension cocycle 
$$(\{\mathcal{E}_{\alpha\beta}\},\{F_{\alpha\beta\gamma}\})$$
of $\mathcal{F}$ by $\mathcal{G}$ on $\{U_{\alpha}\}$ is a collection of extensions $\{\mathcal{E}_{\alpha\beta}\}$ of $\mathcal{F}|_{U_{\alpha\beta}}$ by $\mathcal{G}|_{U_{\alpha\beta}}$, and isomorphisms
$$F_{\alpha\beta\gamma}: \mathcal{E}_{\alpha\beta}+\mathcal{E}_{\beta\gamma}\cong\mathcal{E}_{\alpha\gamma}$$
on $U_{\alpha\beta\gamma}$ satisfying the compatibility condition as in (\ref{compatible}).
\end{definition}

Two extension cocycles $(\{\mathcal{E}_{\alpha\beta}\},\{F_{\alpha\beta\gamma}\})$, $(\{\mathcal{E}'_{\alpha\beta}\},\{F'_{\alpha\beta\gamma}\})$ are isomorphic if there exist isomorphism of extensions
$$\phi_{\alpha\beta}:\mathcal{E}_{\alpha\beta}\cong\mathcal{E}'_{\alpha\beta}$$
such that
$$\phi_{\alpha\gamma}\circ F_{\alpha\beta\gamma}=F'_{\alpha\beta\gamma}\circ(\phi_{\alpha\beta}+\phi_{\beta\gamma}).$$

\begin{definition}
We say an extension cocycle is a boundary if it is isomorphic to 
$$\partial\{\mathcal{E}_{\alpha}\}=(\{\mathcal{E}_{\alpha}-\mathcal{E}_{\beta}\},F_{\alpha\beta\gamma})$$
for a collection of extensions $\{\mathcal{E}_{\alpha}\}$ of $\mathcal{F}|_{U_\alpha}$ by $\mathcal{G}|_{U_\alpha}$, where
$$F_{\alpha\beta\gamma}:\mathcal{E}_\alpha-\mathcal{E}_\beta+\mathcal{E}_\beta-\mathcal{E}_\gamma\longrightarrow\mathcal{E}_\alpha-\mathcal{E}_\gamma$$
is the obvious isomorphism.
\end{definition}

The set of isomorphism classes of extension cocycles form an abelian group, and the boundaries form a subgroup. The quotient group is called the group of extension classes, and is denoted by $\Xi_{\mathcal{O}_X}(U_\alpha;\mathcal{F},\mathcal{G})$. We refer to section $3$ in \cite{V} for the proofs of the above facts.

\begin{theorem} \label{thm4.5}
For $\{U_\alpha\}$ a good cover, there is canonical group isomorphism of $\Xi_{\mathcal{O}_X}(U_\alpha;\mathcal{F},\mathcal{G})$ with the kernel of the localization map $\mathrm{Ext}^2_{\mathcal{O}_X}(\mathcal{F},\mathcal{G})\rightarrow H^0(X, \mathcal{E}xt^2(\mathcal{F},\mathcal{G}))$.

\end{theorem}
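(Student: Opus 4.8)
The plan is to identify the group $\Xi_{\mathcal{O}_X}(U_\alpha;\mathcal{F},\mathcal{G})$ of extension classes with a \v{C}ech-type hypercohomology group and then match that group with the kernel of the localization map. First I would fix an injective resolution $0\to\mathcal{G}\to\mathcal{Q}^0\to\mathcal{Q}^1\to\cdots$ and recall that $\mathrm{Ext}^2_{\mathcal{O}_X}(\mathcal{F},\mathcal{G})$ is computed as $H^2$ of the complex $\mathrm{Hom}_{\mathcal{O}_X}(\mathcal{F},\mathcal{Q}^\bullet)$, while the sheaf $\mathcal{E}xt^2(\mathcal{F},\mathcal{G})$ is the sheafification of the presheaf $U\mapsto H^2(\mathrm{Hom}_{\mathcal{O}_U}(\mathcal{F}|_U,\mathcal{Q}^\bullet|_U))$. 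The localization map sends a global Ext class to its collection of restrictions; its kernel therefore consists of global classes that vanish in $\mathrm{Ext}^2_{\mathcal{O}_{U_\alpha}}(\mathcal{F}|_{U_\alpha},\mathcal{G}|_{U_\alpha})$ for every $\alpha$ in a sufficiently fine cover, i.e. classes represented by a global $2$-cocycle which becomes a coboundary on each $U_\alpha$.

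The second step is to construct the isomorphism at the cocycle level. Given an extension cocycle $(\{\mathcal{E}_{\alpha\beta}\},\{F_{\alpha\beta\gamma}\})$, each $\mathcal{E}_{\alpha\beta}$ represents a class $e_{\alpha\beta}\in\mathrm{Ext}^1_{\mathcal{O}_{U_{\alpha\beta}}}(\mathcal{F},\mathcal{G})$, and the existence of the isomorphisms $F_{\alpha\beta\gamma}$ says exactly that $e_{\alpha\beta}-e_{\alpha\gamma}+e_{\beta\gamma}=0$ on $U_{\alpha\beta\gamma}$, i.e. $\{e_{\alpha\beta}\}$ is a \v{C}ech $1$-cocycle with values in the presheaf $\underline{\mathrm{Ext}^1}(\mathcal{F},\mathcal{G})$. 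Using the local-to-global spectral sequence $H^p(U_\alpha;\underline{\mathrm{Ext}^q}(\mathcal{F},\mathcal{G}))\Rightarrow \mathrm{Ext}^{p+q}_{\mathcal{O}_X}(\mathcal{F},\mathcal{G})$, such a cocycle produces a class in $\mathrm{Ext}^2_{\mathcal{O}_X}(\mathcal{F},\mathcal{G})$; I would check that changing the $\mathcal{E}_{\alpha\beta}$ within their isomorphism classes, or changing the $F_{\alpha\beta\gamma}$, does not affect this class, and that boundaries $\partial\{\mathcal{E}_\alpha\}$ map to zero because $\{e_\alpha\}$ would be a global $0$-cochain whose coboundary is $\{e_{\alpha\beta}\}$ — here the hypothesis that $\{U_\alpha\}$ is a good cover is what guarantees $H^p(U_{\alpha_0\cdots\alpha_k};\mathcal{E}xt^1)=0$ for $p>0$, so the \v{C}ech computation of the presheaf cohomology agrees with the sheaf cohomology and the spectral sequence degenerates appropriately on the relevant line. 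The image lands in the kernel of localization precisely because each $\mathcal{E}_{\alpha\beta}$, and hence the class, is built from data that is locally trivial in $\mathrm{Ext}^2$ (the class comes from the $H^1(\underline{\mathrm{Ext}^1})$ edge, which maps to zero in $H^0(\mathcal{E}xt^2)$).

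For surjectivity and injectivity I would argue as follows. Given a class $\eta\in\ker\big(\mathrm{Ext}^2_{\mathcal{O}_X}(\mathcal{F},\mathcal{G})\to H^0(\mathcal{E}xt^2(\mathcal{F},\mathcal{G}))\big)$, the spectral sequence identifies this kernel with $E_\infty^{1,1}$, a subquotient of $H^1(U_\alpha;\underline{\mathrm{Ext}^1}(\mathcal{F},\mathcal{G}))$; choosing a \v{C}ech representative $\{e_{\alpha\beta}\}$ and then realizing each $e_{\alpha\beta}$ by an actual extension $\mathcal{E}_{\alpha\beta}$ over $U_{\alpha\beta}$, the vanishing of the \v{C}ech coboundary lets me pick isomorphisms $F_{\alpha\beta\gamma}:\mathcal{E}_{\alpha\beta}+\mathcal{E}_{\beta\gamma}\cong\mathcal{E}_{\alpha\gamma}$; adjusting these by automorphisms (which form a torsor under $H^0(\mathcal{H}om(\mathcal{F},\mathcal{G}))$ on each triple overlap, and one uses the good-cover hypothesis again to kill the relevant $H^1$) one arranges the compatibility (\ref{compatible}), producing an extension cocycle mapping to $\eta$. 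Injectivity is the statement that an extension cocycle whose class is zero is a boundary: unwinding, $\{e_{\alpha\beta}\}$ being a \v{C}ech coboundary $\{e_\alpha-e_\beta\}$ gives extensions $\mathcal{E}_\alpha$ with $\mathcal{E}_{\alpha\beta}\cong\mathcal{E}_\alpha-\mathcal{E}_\beta$, and one must promote these to an isomorphism of extension cocycles respecting the $F$'s, again obstructed by a $\v{C}ech$ $H^1$ of $\mathcal{H}om(\mathcal{F},\mathcal{G})$ that vanishes on a good cover. The main obstacle I anticipate is purely bookkeeping rather than conceptual: keeping careful track of the isomorphisms $F_{\alpha\beta\gamma}$ and the automorphism ambiguities so that the map is well-defined on isomorphism classes of cocycles modulo boundaries, and verifying that ``good cover'' is exactly the hypothesis that makes all the auxiliary \v{C}ech obstruction groups vanish; this is precisely the content carried out in section~3 of \cite{V}, and I would invoke it rather than reprove it.
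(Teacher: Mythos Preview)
The paper's own proof is a one-line citation: ``See Theorem (3.13) of \cite{V}.'' Since your proposal also ultimately invokes section~3 of \cite{V}, you land in the same place and the proposal is acceptable as a proof of the statement.

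That said, the sketch you provide before the citation contains an imprecision worth flagging. You write that the spectral sequence identifies the kernel of $\mathrm{Ext}^2_{\mathcal{O}_X}(\mathcal{F},\mathcal{G})\to H^0(X,\mathcal{E}xt^2(\mathcal{F},\mathcal{G}))$ with $E_\infty^{1,1}$. This is not correct: the kernel is the filtration step $F^1\mathrm{Ext}^2$, which is an extension of $E_\infty^{1,1}$ by $E_\infty^{2,0}$, and $E_\infty^{2,0}$ is a quotient of $H^2(X,\mathcal{H}om(\mathcal{F},\mathcal{G}))$, which need not vanish. On the cocycle side this piece is visible too: an extension cocycle with each $\mathcal{E}_{\alpha\beta}$ the split extension is encoded entirely by the isomorphisms $F_{\alpha\beta\gamma}$, which are then automorphisms of the trivial extension, i.e.\ sections of $\mathcal{H}om(\mathcal{F},\mathcal{G})$ on triple overlaps, and the compatibility condition makes them a \v{C}ech $2$-cocycle. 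So the $F$-data carries the $E_\infty^{2,0}$ contribution, and your map from $\Xi$ to the kernel has to see it; your outline treats the $F_{\alpha\beta\gamma}$ only as an auxiliary choice to be adjusted, rather than as genuine data contributing to the class. This does not invalidate the proposal, since you defer to \cite{V} for the actual argument, but if you were to carry out the proof yourself you would need to track both graded pieces.
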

\begin{proof}
See Theorem (3.13) of \cite{V}.
\end{proof}

To finish the proof, we cover $\mathcal{X}$ by open affine subscheme $\{\mathcal{U}_{\alpha}\}$ such that $\mathcal{L}_{\alpha}^{\lor}=\mathcal{L}^{\lor}|_{\mathcal{U}_{\alpha}}$ has a lifting $\widetilde{\mathcal{L}}_{\alpha}^{\lor}$ over $\widetilde{\mathcal{U}}_{\alpha}$. The difference of $\widetilde{\mathcal{L}}_{\alpha}^{\lor}$ and $\widetilde{\mathcal{L}}_{\beta}^{\lor}$ on the overlap corresponds to an extension $\mathcal{E}_{\alpha\beta}$ of $\mathcal{P}^1_{U_{\alpha\beta}}(L_{\alpha\beta})$ by $J\otimes_{\mathbb{C}}L_{\alpha\beta}$. For each triple $\alpha$, $\beta$, $\gamma$, consider the 
isomorphism
$$F_{\alpha\beta\gamma}: \mathcal{E}_{\alpha\beta}+\mathcal{E}_{\beta\gamma}\cong\mathcal{E}_{\alpha\gamma}$$
in proposition \ref{prop4.2} $(a)$.

Then $(\mathcal{E}_{\alpha\beta},F_{\alpha\beta\gamma})$ is an extension cocycle, which we will denote simply by $(\mathcal{E}_{\alpha\beta})$. If $\overline{\mathcal{L}}^{\lor}_{\alpha}$ is another collections of liftings, coresponding to another extension cocycle $(\mathcal{E}'_{\alpha\beta})$,
we get isomorphisms
$$\mathcal{E}_{\alpha\beta}\cong\mathcal{E}'_{\alpha\beta}+\mathcal{E}(\widetilde{\mathcal{L}}_\alpha^{\lor},\overline{\mathcal{L}}_\alpha^{\lor})-\mathcal{E}(\widetilde{\mathcal{L}}_\beta^{\lor},\overline{\mathcal{L}}_\beta^{\lor}).$$
by proposition \ref{prop4.2} $(a)$.
One checks that this is an isomorphism of extension cocycles. Thus the class of 
$$[\mathcal{E}_{\alpha\beta}]\in\Xi_{\mathcal{O}_X}(U_\alpha;\mathcal{P}^1_X(L),J\otimes_{\mathbb{C}}L)$$
is independent of the choice of local liftings.

A global lifting exists if and only if we can choose local liftings $\widetilde{\mathcal{L}}_{\alpha}^{\lor}$ and isomorphisms of line bundles $\phi_{\alpha\beta}:\widetilde{\mathcal{L}}_{\alpha}^{\lor}\rightarrow\widetilde{\mathcal{L}}_{\beta}^{\lor}$ satisfying the cocycle condition
$$\phi_{\alpha\beta}\circ\phi_{\beta\gamma}=\phi_{\alpha\gamma}.$$
By proposition \ref{prop4.2} $(c)$, to give $\phi_{\alpha\beta}$ is equivalent to assigning splittings for $\mathcal{E}_{\alpha\beta}$. It is easy to check that $\phi_{\alpha\beta}$ satisfies cocycle condition if and only if $(\mathcal{E}_{\alpha\beta})$ is isomorphic to the trivial extension cocycle.

Conversely, if the class 
$$[\mathcal{E}_{\alpha\beta}]\in\Xi_{\mathcal{O}_X}(U_\alpha;\mathcal{P}^1_X(L),J\otimes_{\mathbb{C}}L)$$
is zero, $(\mathcal{E}_{\alpha\beta})$ is isomorphic to a boundary $(\mathcal{E}_{\alpha}-\mathcal{E}_{\beta})$. By proposition \ref{prop4.2} $(b)$, we can choose local lifting $\overline{\mathcal{L}}_{\alpha}^{\lor}$ such that $\mathcal{E}(\widetilde{\mathcal{L}}_{\alpha}^{\lor},\overline{\mathcal{L}}_{\alpha}^{\lor})\cong\mathcal{E}_{\alpha}$. Then $\overline{\mathcal{L}}_{\alpha}^{\lor}$ will patch together to give a global lifting.

Combine the above discussion with theorem \ref{thm4.5} and the fact that $\mathcal{E}xt^2_{\mathcal{O}_X}(\mathcal{P}^1_X(L),L)=0$ (since (\ref{invariant}) is a locally free resolution of $\mathcal{P}^1_X(L)$), we get
\begin{theorem} Let $X$ be a l.c.i scheme, $L$ a line bundle on $X$. For any small extension
$$e:\xymatrix{0\ar[r]&J\ar[r]&\widetilde{A}\ar[r]&A\ar[r]&0}$$
and any deformation $(\mathcal{X},\mathcal{L})$ of $(X,L)$ over $A$,
\begin{enumerate}
\item There is an element 
$$\circ(e)\in J\otimes_{\mathbb{C}}\mathrm{Ext}^2_{\mathcal{O}_X}(\mathcal{P}^1_X(L),L),$$
such that $\circ(e)=0$ if and only if a lifting $(\widetilde{\mathcal{X}},\widetilde{\mathcal{L}})$ of $(\mathcal{X},\mathcal{L})$ to $\widetilde{A}$ exists.
\item If a lifting exists, the set of isomorphism classes of liftings is a principal homogeneous space  for the group
$$J\otimes_{\mathbb{C}}\mathrm{Ext}^1_{\mathcal{O}_X}(\mathcal{P}^1_X(L),L).$$
\end{enumerate}
\end{theorem}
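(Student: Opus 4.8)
\emph{Plan and construction of the obstruction class.} The preceding construction has already carried out the substantive work, and the plan is to assemble it. I use throughout the dictionary used implicitly above: a deformation of the pair $(X,L)$ over an Artin ring is the same datum as a flat deformation of the total space of $L^{\lor}$ compatible with its fibrewise $\mathbb{C}^*$-action, equivalently a line bundle on a flat deformation of $X$, so a lifting of $(\mathcal{X},\mathcal{L})$ may be replaced by a $\mathbb{C}^*$-equivariant lifting of $\mathcal{L}^{\lor}$. First I would fix a good affine cover $\{\mathcal{U}_\alpha\}$ of $\mathcal{X}$ (good in the sense required for Theorem \ref{thm4.5}), restricting to a good cover $\{U_\alpha\}$ of $X$, refined so that each $\mathcal{L}^{\lor}|_{\mathcal{U}_\alpha}$ has a lifting $\widetilde{\mathcal{L}}_\alpha^{\lor}$ to $\widetilde{A}$; such local liftings always exist since $X$, hence the total space of $L^{\lor}$, is l.c.i. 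On overlaps Proposition \ref{prop4.2} attaches to $(\widetilde{\mathcal{L}}_\alpha^{\lor},\widetilde{\mathcal{L}}_\beta^{\lor})$ an extension $\mathcal{E}_{\alpha\beta}$ of $\mathcal{P}^1_X(L)$ by $J\otimes_{\mathbb{C}}L$ together with isomorphisms $F_{\alpha\beta\gamma}$ satisfying (\ref{compatible}); this is an extension cocycle with class $[\mathcal{E}_{\alpha\beta}]\in\Xi_{\mathcal{O}_X}(U_\alpha;\mathcal{P}^1_X(L),J\otimes_{\mathbb{C}}L)$, and comparing with another choice $\{\overline{\mathcal{L}}_\alpha^{\lor}\}$ via Proposition \ref{prop4.2}(a) (the cocycles differ by $\partial\{\mathcal{E}(\widetilde{\mathcal{L}}_\alpha^{\lor},\overline{\mathcal{L}}_\alpha^{\lor})\}$) shows this class is independent of the chosen local liftings. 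Theorem \ref{thm4.5} identifies $\Xi_{\mathcal{O}_X}(U_\alpha;\mathcal{P}^1_X(L),J\otimes_{\mathbb{C}}L)$ with the kernel of the localization map to $H^0(X,\mathcal{E}xt^2_{\mathcal{O}_X}(\mathcal{P}^1_X(L),J\otimes_{\mathbb{C}}L))$; since (\ref{invariant}) is a locally free resolution of $\mathcal{P}^1_X(L)$ of length one, $\mathcal{E}xt^2_{\mathcal{O}_X}(\mathcal{P}^1_X(L),-)=0$, hence that kernel is all of $\mathrm{Ext}^2_{\mathcal{O}_X}(\mathcal{P}^1_X(L),J\otimes_{\mathbb{C}}L)=J\otimes_{\mathbb{C}}\mathrm{Ext}^2_{\mathcal{O}_X}(\mathcal{P}^1_X(L),L)$. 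I define $\circ(e):=[\mathcal{E}_{\alpha\beta}]$ under this identification.

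\emph{Part (1): the vanishing criterion.} A global lifting $(\widetilde{\mathcal{X}},\widetilde{\mathcal{L}})$ amounts, once the local $\widetilde{\mathcal{L}}_\alpha^{\lor}$ are fixed, to a system of bundle isomorphisms $\phi_{\alpha\beta}:\widetilde{\mathcal{L}}_\alpha^{\lor}\cong\widetilde{\mathcal{L}}_\beta^{\lor}$ with $\phi_{\alpha\beta}\circ\phi_{\beta\gamma}=\phi_{\alpha\gamma}$ (descent along $\{\mathcal{U}_\alpha\}$). By Proposition \ref{prop4.2}(c) the $\phi_{\alpha\beta}$ correspond bijectively to splittings of the $\mathcal{E}_{\alpha\beta}$, and I would check that the cocycle condition on the $\phi_{\alpha\beta}$ turns, through (\ref{compatible}), into compatibility of these splittings with the $F_{\alpha\beta\gamma}$, i.e. into an isomorphism of $(\mathcal{E}_{\alpha\beta},F_{\alpha\beta\gamma})$ with the trivial extension cocycle. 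Thus a global lifting exists iff $\circ(e)=[\mathcal{E}_{\alpha\beta}]=0$. Conversely, if $[\mathcal{E}_{\alpha\beta}]=0$ then $(\mathcal{E}_{\alpha\beta})$ is a boundary $\partial\{\mathcal{E}_\alpha\}$, and Proposition \ref{prop4.2}(b) lets me replace each $\widetilde{\mathcal{L}}_\alpha^{\lor}$ by a lifting $\overline{\mathcal{L}}_\alpha^{\lor}$ with $\mathcal{E}(\widetilde{\mathcal{L}}_\alpha^{\lor},\overline{\mathcal{L}}_\alpha^{\lor})\cong\mathcal{E}_\alpha$; by Proposition \ref{prop4.2}(a) the extension cocycle of $\{\overline{\mathcal{L}}_\alpha^{\lor}\}$ is then trivial, so these liftings glue to a global lifting.

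\emph{Part (2): the torsor structure.} Assume a global lifting $\widetilde{\mathcal{L}}_1^{\lor}$ exists. For any other lifting $\widetilde{\mathcal{L}}_2^{\lor}$, the global form of the construction (the Remark after Proposition \ref{prop4.2}) produces a class $\mathcal{E}(\widetilde{\mathcal{L}}_1^{\lor},\widetilde{\mathcal{L}}_2^{\lor})$ in $\mathrm{Ext}^1_{\mathcal{O}_X}(\mathcal{P}^1_X(L),J\otimes_{\mathbb{C}}L)=J\otimes_{\mathbb{C}}\mathrm{Ext}^1_{\mathcal{O}_X}(\mathcal{P}^1_X(L),L)$. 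Proposition \ref{prop4.2}(a) gives the additivity $\mathcal{E}(\widetilde{\mathcal{L}}_1^{\lor},\widetilde{\mathcal{L}}_2^{\lor})+\mathcal{E}(\widetilde{\mathcal{L}}_2^{\lor},\widetilde{\mathcal{L}}_3^{\lor})\cong\mathcal{E}(\widetilde{\mathcal{L}}_1^{\lor},\widetilde{\mathcal{L}}_3^{\lor})$, turning the set of liftings into a set acted on by $J\otimes_{\mathbb{C}}\mathrm{Ext}^1_{\mathcal{O}_X}(\mathcal{P}^1_X(L),L)$; the globalized Proposition \ref{prop4.2}(c) says this class splits iff the two liftings are isomorphic, so the action is free. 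For transitivity I would realize a given class $\xi$ by local liftings with local difference $\xi|_{U_\alpha}$ from $\widetilde{\mathcal{L}}_1^{\lor}$ (Proposition \ref{prop4.2}(b)), observe by Proposition \ref{prop4.2}(a) that their mutual differences on overlaps vanish so the associated extension cocycle has class $0$ (it equals the one built from the restrictions of $\widetilde{\mathcal{L}}_1^{\lor}$), patch them via the gluing argument of Part (1), and finally check that the global difference of the patched lifting from $\widetilde{\mathcal{L}}_1^{\lor}$ is $\xi$. Hence the isomorphism classes of liftings form a principal homogeneous space under $J\otimes_{\mathbb{C}}\mathrm{Ext}^1_{\mathcal{O}_X}(\mathcal{P}^1_X(L),L)$.

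\emph{Where the difficulty lies.} The real work is the global bookkeeping in Parts (1) and (2): verifying that $(\mathcal{E}_{\alpha\beta},F_{\alpha\beta\gamma})$ satisfies the cocycle axioms independently of the auxiliary embeddings $\widetilde{\mathcal{S}}_\alpha$ (this is exactly what Lemma \ref{cano} and the Remark after Proposition \ref{prop4.2} secure), and, most delicately, matching \textquotedblleft $(\mathcal{E}_{\alpha\beta},F_{\alpha\beta\gamma})$ is trivial\textquotedblright\ precisely with \textquotedblleft the local liftings descend to a global one\textquotedblright, which forces one to reconcile the splitting/isomorphism bijection of Proposition \ref{prop4.2}(c) with the triple-overlap cocycle condition; the transitivity step in Part (2) needs the analogous global version of Proposition \ref{prop4.2}(b). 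Once this is in place, the cohomological input — Theorem \ref{thm4.5} together with the vanishing $\mathcal{E}xt^2_{\mathcal{O}_X}(\mathcal{P}^1_X(L),-)=0$ coming from (\ref{invariant}) — is formal.
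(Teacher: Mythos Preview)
Your proposal is correct and follows essentially the same route as the paper: cover by affines, take local liftings of $\mathcal{L}^{\lor}$, form the extension cocycle $(\mathcal{E}_{\alpha\beta},F_{\alpha\beta\gamma})$ via Proposition~\ref{prop4.2}, show its class in $\Xi_{\mathcal{O}_X}$ is independent of choices, identify this group with $\mathrm{Ext}^2$ via Theorem~\ref{thm4.5} and the vanishing of $\mathcal{E}xt^2$ coming from (\ref{invariant}), and use Proposition~\ref{prop4.2}(b),(c) for the gluing/vanishing equivalence. If anything, you spell out the torsor argument for part~(2) more explicitly than the paper does (the paper leaves it to the reader after the obstruction discussion), but the ingredients and logic are the same.
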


\end{document}